%%%%%%%%%%%%%%%%%%%%%%%%%%%%%%%%%%%%%%%%%%%%%%%%%%%%%%%%%%%%%%%%%%%%%%%%%%%%%%%%%%%%%%%%%%%%%%%%%%%%%%%%%%%%%%%%%%%%%%%%%%%%%%%%
%                                                                                                                              %
%                                     f-left-invariant Riemannian metrics on Lie groups                                        %
%                                                                                                                              %
%                                               Hamid Reza Salimi Moghaddam                                                    %
%%%%%%%%%%%%%%%%%%%%%%%%%%%%%%%%%%%%%%%%%%%%%%%%%%%%%%%%%%%%%%%%%%%%%%%%%%%%%%%%%%%%%%%%%%%%%%%%%%%%%%%%%%%%%%%%%%%%%%%%%%%%%%%%

\documentclass[11pt]{amsart}
\usepackage{amssymb}
\usepackage{amsaddr}
\usepackage[margin=1in]{geometry}
\usepackage[colorlinks]{hyperref}
\usepackage{graphicx}

\oddsidemargin=.7in \evensidemargin=.7in

\textwidth=6.2in

\addtolength{\oddsidemargin}{-.70in}
\setlength{\evensidemargin}{\oddsidemargin}

%\addtolength{\topmargin}{-0.5in}

\newtheorem{theorem}{Theorem}[section]
\newtheorem{lem}[theorem]{Lemma}
\newtheorem{prop}[theorem]{Proposition}
\newtheorem{cor}[theorem]{Corollary}

\theoremstyle{definition}
\newtheorem{definition}[theorem]{Definition}
\newtheorem{example}[theorem]{Example}

\theoremstyle{remark}
\newtheorem{remark}[theorem]{Remark}

\numberwithin{equation}{section}

\begin{document}

\newcommand{\spacing}[1]{\renewcommand{\baselinestretch}{#1}\large\normalsize}
\spacing{1.14}
\title[f-left-invariant Riemannian metrics on Lie groups]{f-left-invariant Riemannian metrics on Lie groups}

\author {Hamid Reza Salimi Moghaddam}

\address{Department of Pure Mathematics, Faculty of  Mathematics and Statistics, University of Isfahan,
Isfahan, 81746-73441-Iran.\\E-Mails: salimi.moghaddam@gmail.com and hr.salimi@sci.ui.ac.ir\\
Scopus Author ID: 26534920800 \\ ORCID Id:0000-0001-6112-4259}
%\email{salimi.moghaddam@gmail.com and hr.salimi@sci.ui.ac.ir}

%\keywords{ conformally equivalent metric, left-invariant Riemannian metric, Lie group, Ricci soliton. \\
%AMS 2020 Mathematics Subject Classification: 53C30, 53C21, 22E60.}

\date{\today}

\begin{abstract}
With a f-left-invariant Riemannian metric on a Lie group $G$, we mean a Riemannian metric which is conformally equivalent to a left-invariant Riemannian metric, with the conformal factor $f$. In this article, we study the geometry of such metrics and give a necessary and sufficient condition for an f-left-invariant Riemannian metric to be a Ricci soliton. Using this result, for any expansion constant $\lambda$, we obtain a flat gradient Ricci soliton on some two and three-dimensional non-abelian Lie groups. We give an example of a non-flat steady gradient Ricci soliton and construct some examples of non-flat shrinking, steady, and expanding non-gradient Ricci solitons on the non-abelian Lie group $\Bbb{R}\rtimes\Bbb{R}^{+}$. Finally, we study f-left-invariant Riemannian metrics on the Heisenberg group.\\
\textbf{Keywords:} conformally equivalent metric, left-invariant Riemannian metric, Lie group, Ricci soliton. \\
\textbf{AMS 2020 Mathematics Subject Classification:} 53C30, 53C21, 22E60.
\end{abstract}

\maketitle

%%-------------------------Introduction-------------------------
\section{\textbf{Introduction}}\label{Introduction}
A Riemannian manifold $(M,g)$ is named a Ricci soliton if there exists a smooth vector field $X$ and a constant $\lambda\in\Bbb{R}$ (which is called expansion constant) such that:
\begin{equation}\label{Ricci soliton equation}
    \mathcal{L}_Xg=2(\lambda g-\mbox{Ric}_g),
\end{equation}
where $\mathcal{L}_Xg$ denotes the Lie derivative of $g$ with respect to $X$ and $\mbox{Ric}_g$ is the Ricci tensor of $g$.
The group $\textsf{Diff}(M)$ of diffeomorphisms of $M$ and the group $\Bbb{R}^+$ of positive real numbers act on the space of Riemannian metrics on $M$ as follows:
\begin{eqnarray*}
  \psi. g &=& \psi^\ast g \ \ \ \forall \psi\in \textsf{Diff}(M), \\
  c. g &=& cg \ \ \ \forall c\in\Bbb{R}^\ast.
\end{eqnarray*}
It can be demonstrated that if $(M,g)$ is a Ricci soliton with a vector field $X$ and an expansion constant $\lambda$, and $\psi\in \textsf{Diff}(M)$ and $c\in\Bbb{R}^+$, then $(M,\psi^\ast g)$ is also a Ricci soliton with the vector field $\psi^\ast X$ and the same expansion constant $\lambda$. Additionally, $(M,cg)$ is a Ricci soliton with the vector field $\frac{1}{c}X$ and the expansion constant $\frac{\lambda}{c}$. Ricci solitons are regarded as the self-similar solutions of the Ricci flow equation since they are fixed points of the Ricci flow equation in the space of Riemannian metrics modulo the above action (see \cite{Chow}).
The notion of Ricci solitons can be considered as a generalization of Einstein manifolds. Furthermore, the study of Ricci solitons, particularly algebraic Ricci solitons, has played an essential role in the study of Alekseevskii conjecture, which is finally proved by  B\"ohm and Lafuente in \cite{Bohm-Lafuente} (for more details see \cite{Jablonski2}).\\
Ricci solitons are classified as shrinking (when $\lambda>0$), steady (when $\lambda=0$), or expanding (when $\lambda<0$). If there exists a smooth real-valued function $\Phi$ such that $X=\mbox{grad}\Phi$, then the Ricci soliton is known as a gradient Ricci soliton, where the function $\Phi$ is named as a potential function of the Ricci soliton (see \cite{Chow}).\\
Since Hamilton's seminal paper, \cite{Hamilton}, on Ricci flow and Perelman's proof of the Poincar\'{e} conjecture, \cite{Perelman}, various mathematicians have focused on finding examples and classifying Ricci solitons (see \cite{Bernstein-Mettler}, \cite{Bohm-Lafuente}, \cite{Cao-Chen}, \cite{Catino-Mastrolia-Monticelli-Rigoli}, \cite{Chow}, \cite{Djiadeu-Boucetta-Wouafo}, \cite{Jablonski1}, \cite{Jablonski2} and \cite{Salimi}). For instance, Bernstein and Mettler classified non-compact two-dimensional gradient Ricci solitons, \cite{Bernstein-Mettler}, while Baird described a class of three-dimensional Ricci solitons and provided some examples (see \cite{Baird}).\\
An intriguing research area within this field involves studying Ricci solitons that are conformally equivalent to specific Riemannian metrics or have a Lie group or homogeneous space as the base space. For example, Cao and Chen, \cite{Cao-Chen}, examined locally conformally flat gradient steady Ricci solitons, while Klepikov, \cite{Klepikov}, investigated conformally flat algebraic Ricci solitons on Lie groups and characterized their Ricci operator.\\
This article focuses on Ricci solitons that are conformally equivalent to left-invariant Riemannian metrics, which are referred to as f-left-invariant metrics, with $f$ as the conformal factor. In a sense, f-left-invariant Riemannian metrics lie between left-invariant Riemannian metrics and general Riemannian metrics. Section 2 presents a comprehensive theory of f-left-invariant Riemannian metrics and establishes a necessary and sufficient condition for them to be Ricci solitons. Section 3 explores f-left-invariant Ricci solitons on two-dimensional Lie groups and provides examples of abelian flat shrinking, steady, and expanding gradient Ricci solitons, as well as abelian flat steady non-gradient Ricci solitons. In the non-abelian case, examples of non-flat shrinking, expanding, and steady non-gradient Ricci solitons are given, along with an example of a non-flat steady gradient Ricci soliton. Moreover, examples of non-abelian flat shrinking, expanding, and steady gradient Ricci solitons are provided. In Section 4, the focus shifts to the three-dimensional Lie groups$\Bbb{R}^2\rtimes\Bbb{R}^{+}$ and the Heisenberg group. It is shown that the non-abelian Lie group $\Bbb{R}^2\rtimes\Bbb{R}^{+}$ admits flat shrinking, expanding, and steady gradient and non-gradient Ricci soliton structures. Finally, the Heisenberg group is studied, and an example of a left-invariant non-gradient expanding Ricci soliton is constructed using the methodology presented in this article. This approach can be applied to construct examples of Ricci solitons in higher-dimensional Lie groups or be generalized to homogeneous spaces.

%%---------------Riemannian metrics which are conformally equivalent to the left-invariant metrics---------------

\section{\textbf{Riemannian metrics which are conformally equivalent to the left-invariant metrics}}\label{conformal metrics}
In this section, our focus revolves around the examination of Riemannian metrics on Lie groups that are conformally equivalent to left-invariant Riemannian metrics. These metrics are referred to as f-left-invariant metrics, where $f$ represents the conformal factor. We begin with a definition:
\begin{definition}
Consider a smooth, real, positive function $f$ on a Lie group $G$, with $f(e)=1$, where $e$ denotes the unit element of $G$. A Riemannian metric $\tilde{g}$ is classified as f-left-invariant (f-right-invariant) if it is conformally equivalent to a left-invariant (right-invariant) metric $g$, with $f$ serving as the conformal factor. If a Riemannian metric possesses both f-left-invariant and f-right-invariant characteristics, it is referred to as an f-bi-invariant metric. In this case, $f$ must be a class function.
\end{definition}

\begin{remark}\label{generalized cases}
It is worth noting that the above definition can be generalized to semi-Riemannian metrics and Finsler metrics similarly.
\end{remark}

\begin{remark}\label{existence of metric}
Let $f$ be a smooth, positive real function on a Lie group $G$ such that $f(e)=1$. It can be readily observed that $G$ admits a left-invariant Riemannian metric $g$. Consequently, $G$ also possesses an f-left-invariant Riemannian metric $\tilde{g}$, defined as follows for any $a\in G$:
\begin{equation}\label{eq2}
    \tilde{g}_a(X_a,Y_a):=f(a)\tilde{g}_e(L_{a^{-1}\ast}X_a,L_{a^{-1}\ast}Y_a).
\end{equation}
In the case of f-right-invariant we will have a similar result.
\end{remark}
\begin{lem}\label{f-left and f-right condition}
Suppose that $f$ is a smooth real positive function on a Lie group $G$ such that $f(e)=1$. A Riemannian metric $\tilde{g}$ on $G$ is $f$-left-invariant if and only if, for any $a,b\in G$,
\begin{equation}\label{f-left}
\tilde{g}_{ba}(L_{b\ast}X_a,L_{b\ast}Y_a)=\frac{f(ba)}{f(a)}\tilde{g}_a(X_a,Y_a).
\end{equation}
Similarly, the Riemannian metric $\tilde{g}$ is $f$-right-invariant if and only if
\begin{equation}\label{f-right}
\tilde{g}_{ba}(R_{b\ast}X_a,R_{b\ast}Y_a)=\frac{f(ab)}{f(a)}\tilde{g}_a(X_a,Y_a),
\end{equation}
for all $a,b\in G$.
\end{lem}
\begin{proof}
It suffices to prove the case for f-left-invariant metrics, as the other case follows similarly. Assume that $\tilde{g}$ is a Riemannian metric on $G$ that satisfies equation (3). It can be observed that $\tilde{g}$ is conformally equivalent to the left-invariant Riemannian metric $g$, induced by the inner product $\tilde{g}_e$, with the conformal factor $f$. Conversely, if $\tilde{g}$ is a Riemannian metric on $G$ that is conformally equivalent to a left-invariant Riemannian metric $g$, then for any $a, b\in G$ and any two left-invariant vector fields $X$ and $Y$, we have:
\begin{equation}
    \tilde{g}_{ba}(L_{b\ast}X_a,L_{b\ast}Y_a)=f(ba)g_{ba}(X_{ba},Y_{ba})=f(ba)g_a(X_{a},Y_{a})=\frac{f(ba)}{f(a)}\tilde{g}_a(X_a,Y_a).
\end{equation}
\end{proof}
\begin{prop}\label{equivalence}
Let $G$ denote a connected Lie group equipped with a Riemannian metric $\tilde{g}$ that is $f$-left-invariant. Then, the following statements are equivalent:
\begin{enumerate}
  \item $\tilde{g}$ is $f$-right-invariant, hence $f$-bi-invariant.
  \item $\tilde{g}$ is $Ad(G)$-invariant.
  \item $f(a)\tilde{g}_{a^{-1}}(\zeta_\ast X_a,\zeta_\ast Y_a)=f(a^{-1})\tilde{g}_a(X_a,Y_a)$, for all $a\in G$, where $\zeta$ is the inversion map.
  \item $\tilde{g}(X,[Y,Z])=\tilde{g}([X,Y],Z)$, for all $X,Y,Z\in\frak{g}$.
\end{enumerate}
\end{prop}
\begin{proof}
First, we will demonstrate the equivalence between statements (1) and (2). Let us assume that condition (1) holds. According to Lemma \ref{f-left and f-right condition}, we can deduce the following:
\begin{eqnarray}
% \nonumber to remove numbering (before each equation)
   \tilde{g}_e(Ad_aX_e,Ad_aY_e)&=&\tilde{g}_e(L_{a\ast}R_{a^{-1}\ast}X_e,L_{a\ast}R_{a^{-1}\ast}Y_e) \nonumber  \\
   &=&\tilde{g}_{a^{-1}}(R_{a^{-1}\ast}X_e,R_{a^{-1}\ast}Y_e)\frac{f(aa^{-1})}{f(a^{-1})}  \\
   &=& \tilde{g}_e(X_e,Y_e)\frac{f(ea^{-1})}{f(e)}\frac{f(e)}{f(a^{-1})}=\tilde{g}_e(X_e,Y_e).\nonumber
\end{eqnarray}
Conversely, we will establish that condition (2) implies condition (1). The following calculations demonstrate this:
\begin{eqnarray}
% \nonumber to remove numbering (before each equation)
   \tilde{g}_{a^{-1}}(R_{a^{-1}\ast}X_e,R_{a^{-1}\ast}Y_e)&=&\tilde{g}_e( L_{a\ast}R_{a^{-1}\ast}X_e,L_{a\ast}R_{a^{-1}\ast}Y_e)\frac{f(a^{-1})}{f(aa^{-1})} \nonumber \\
   &=&\tilde{g}_e(Ad_aX_e,Ad_aY_e) f(a^{-1})  \\
   &=& \tilde{g}_e(X_e,Y_e) f(a^{-1}).\nonumber
\end{eqnarray}
Next, we will demonstrate the equivalence of conditions (1) and (3). Suppose that condition (1) holds. Then, we have:
\begin{eqnarray}
% \nonumber to remove numbering (before each equation)
   \tilde{g}_{a^{-1}}(\zeta_\ast X_a,\zeta_\ast Y_a)&=&\tilde{g}_{a^{-1}}(R_{a^{-1}\ast}\zeta_\ast L_{a^{-1}\ast}X_a,R_{a^{-1}\ast}\zeta_\ast L_{a^{-1}\ast}Y_a) \nonumber  \\
                                           &=&\tilde{g}_{e}(\zeta_\ast L_{a^{-1}\ast}X_a,\zeta_\ast L_{a^{-1}\ast}Y_a)f(a^{-1}) \nonumber  \\
                                           &=&\tilde{g}_{e}(-L_{a^{-1}\ast}X_a,-L_{a^{-1}\ast}Y_a)f(a^{-1})  \\
                                           &=&\tilde{g}_{a}(X_a,Y_a)\frac{f(a^{-1})}{f(a)}.\nonumber
\end{eqnarray}
Conversely, let us assume that condition (3) holds. Then, we have:
\begin{eqnarray}
% \nonumber to remove numbering (before each equation)
   \tilde{g}_a(R_{a\ast}X_e,R_{a\ast}Y_e)&=&\tilde{g}_a(\zeta_\ast L_{a^{-1}\ast}\zeta_\ast X_e,\zeta_\ast L_{a^{-1}\ast}\zeta_\ast Y_e)\nonumber  \\
                              &=&\tilde{g}_{a^{-1}}(L_{a^{-1}\ast}\zeta_\ast X_e,L_{a^{-1}\ast}\zeta_\ast Y_e)\frac{f(a)}{f(a^{-1})}  \\
                              &=&\tilde{g}_e(\zeta_\ast X_e,\zeta_\ast Y_e)\frac{f(a^{-1}e)}{f(e)}\frac{f(a)}{f(a^{-1})}\nonumber\\
                              &=&\tilde{g}_e(X_e,Y_e)f(a).\nonumber
\end{eqnarray}
The equivalence of conditions $(2)$ and $(4)$ is similar to the invariant Riemannian metric case, so we omit it (see lemma 3 page 302 of \cite{Oneill}).
\end{proof}

\begin{cor}
Let $f$ be a smooth positive real function on a compact Lie group $G$, such that $f(e)=1$. Then $G$ admits a $f$-bi-invariant Riemannian metric.
\end{cor}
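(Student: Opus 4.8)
The plan is to reduce the corollary to the two preceding results, namely the existence theorem for $f-$left invariant metrics (Theorem \ref{existence of metric}) and the equivalence theorem (Theorem \ref{equivalence}); the only genuinely new ingredient is the classical fact that a compact Lie group carries an $\mbox{Ad}(G)-$invariant inner product on its Lie algebra $\frak{g}=T_eG$.

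First I would manufacture such an $\mbox{Ad}(G)-$invariant inner product. Starting from an arbitrary inner product $Q$ on $\frak{g}$, I would average it against the normalized Haar measure $d\mu$, which exists precisely because $G$ is compact:
\begin{equation*}
<X,Y>_e:=\int_G Q(\mbox{Ad}_gX,\mbox{Ad}_gY)\,d\mu(g).
\end{equation*}
This integral is finite (compactness), and it defines a symmetric positive definite bilinear form, since the integrand is pointwise nonnegative and is strictly positive whenever $X=Y\neq 0$. The right invariance of Haar measure then yields $<\mbox{Ad}_hX,\mbox{Ad}_hY>_e=<X,Y>_e$ for every $h\in G$, so $<.,.>_e$ is $\mbox{Ad}(G)-$invariant.

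Second, I would insert this inner product into the construction of Theorem \ref{existence of metric} by setting
\begin{equation*}
<X_a,Y_a>_a:=f(a)<L_{a^{-1}\ast}X_a,L_{a^{-1}\ast}Y_a>_e.
\end{equation*}
By that theorem this is a smooth $f-$left invariant Riemannian metric; positivity of $f$ keeps it positive definite and $f(e)=1$ is consistent with the required normalization. Finally, assuming $G$ connected, the metric is $f-$left invariant and its inner product at $e$ is $\mbox{Ad}(G)-$invariant, so the implication $(2)\Rightarrow(1)$ of Theorem \ref{equivalence} shows at once that it is also $f-$right invariant, hence $f-$bi-invariant, which is exactly the claim.

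The step I expect to be the main obstacle is the connectedness assumption hidden in Theorem \ref{equivalence}: a compact Lie group need not be connected, while that theorem is stated for connected $G$. To handle a possibly disconnected compact group I would either pass to the identity component and argue componentwise, or bypass the equivalence theorem entirely and verify $f-$right invariance directly, repeating the $(2)\Rightarrow(1)$ computation at a general point $a$ and using the $\mbox{Ad}(G)-$invariance of $<.,.>_e$ together with $f(e)=1$. Apart from this point the argument is routine, as the two cited theorems carry the analytic and algebraic weight.
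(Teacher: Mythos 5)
Your proposal is correct and is essentially the argument the paper intends: the corollary is stated without proof, but it clearly rests on averaging an arbitrary inner product over Haar measure to obtain an $\mbox{Ad}(G)-$invariant inner product on $\frak{g}$ and then feeding it into the construction of Theorem \ref{existence of metric} and the implication $(2)\Rightarrow(1)$ of Theorem \ref{equivalence}. Your observation about the connectedness hypothesis in Theorem \ref{equivalence} is a fair point the paper glosses over, and your fallback of verifying $f-$right invariance directly from the $\mbox{Ad}-$invariance of $<.,.>_e$ (as in the proof of the $f-$bi-invariant theorem for homomorphic $f$) closes that gap cleanly.
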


Suppose that $g$ is an arbitrary Riemannian metric on an $n$-dimensional manifold $M$. We denote the curvature tensor and the Ricci tensor by
\begin{equation}\label{Curvature Tensor}
    R(X,Y)Z=\nabla_X\nabla_YZ-\nabla_Y\nabla_XZ-\nabla_{[X,Y]}Z,
\end{equation}
and
\begin{equation}\label{Ricci Tensor}
    Ric(Y,Z)=tr(X\longrightarrow R(X,Y)Z),
\end{equation}
where $\nabla$ is the Levi-Civita connection of $g$.\\
For a smooth function $\psi$ on the Riemannian manifold $(M,g)$, the gradient, the Hessian form and the Hessian $(1,1)$-tensor denoted by $\nabla\psi$, $\nabla^2\psi$ and $H_\psi(X):=\nabla_X(\nabla\psi)$, respectively.
So we have $\nabla^2\psi(X,Y)=g(\nabla_X(\nabla\psi),Y)$. The Laplacian of $\psi$ is defied by $\Delta\psi:=tr(H_\psi)$.
\begin{lem}\label{Lemma curvature}
Let $\tilde{g}=fg$ be a $f$-left-invariant Riemannian metric on an $n$-dimensional Lie group $G$, where $f=e^{-2\phi}$, for a smooth function $\phi:G\to\Bbb{R}$ with $\phi(e)=0$. Then for the Levi-Civita connection $\tilde{\nabla}$, the curvature tensor $\tilde{R}$ and the Ricci tensor $\tilde{Ric}$ of the Riemannian metric $\tilde{g}$, for all left-invariant vector fields $X, Y$ and $Z$, we have:
\begin{enumerate}
  \item \begin{eqnarray*}
            \tilde{\nabla}_XY=\frac{1}{2}\Big{(}[X,Y]-ad_X^\ast Y-ad_Y^\ast X\Big{)}-\Big{(}(X\phi)Y+(Y\phi)X-g(X,Y)\nabla\phi\Big{)},
        \end{eqnarray*}
  \item \begin{eqnarray*}
        % \nonumber to remove numbering (before each equation)
          \tilde{R}(X,Y)Z &=& \frac{1}{4}\Big{(}ad_Zad_XY-ad_X^\ast(ad_YZ-ad_Y^\ast Z-ad_Z^\ast Y)\\
          && +ad^\ast_Y(ad_XZ-ad_X^\ast Z-ad_Z^\ast X)-ad_X(ad_Y^\ast Z+ad_Z^\ast Y) \\
          && +ad_Y(ad^\ast_XZ+ad^\ast_ZX)+(ad^\ast_{ad^\ast_YZ}+ad^\ast_{ad^\ast_ZY}-ad^\ast_{ad_YZ})X\\
          && -(ad^\ast_{ad^\ast_XZ}+ad^\ast_{ad^\ast_ZX}-ad^\ast_{ad_XZ})Y\Big{)}\\
          && +\frac{1}{2}(ad^\ast_{ad_XY}Z+ad^\ast_Z ad_XY)-(g(X,Z)H_\phi Y-g(Y,Z)H_\phi X)\\
          && +\Big{(}\nabla^2\phi(Y,Z)+(Y\phi)(Z\phi)-g(Y,Z)\|\nabla\phi\|^2 \Big{)}X\\
          && -\Big{(}\nabla^2\phi(X,Z)+(X\phi)(Z\phi)-g(X,Z)\|\nabla\phi\|^2 \Big{)}Y\\
          && +\Big{(}(X\phi)g(Y,Z)-(Y\phi)g(X,Z)\Big{)}\nabla\phi,
        \end{eqnarray*}
  \item \begin{eqnarray*}
        % \nonumber to remove numbering (before each equation)
          \tilde{Ric}(X,Y) &=& -\frac{1}{2}\Big{(}tr(ad_X\circ ad_Y)+tr(ad_X\circ ad^\ast_Y)+g(ad_HX,Y)+g(ad_HY,X)\Big{)}\\
          && -\frac{1}{4}tr(J_X\circ J_Y)+\Big{(}\Delta\phi-(n-2)\|\nabla\phi\|^2\Big{)}g(X,Y)+\frac{n-2}{\psi}\nabla^2\psi(X,Y),
        \end{eqnarray*}
\end{enumerate}
where $\psi=e^\phi$, $H$ is the mean curvature vector on the Lie algebra $\frak{g}$ of $G$, defined by $g(H,X)=tr(ad_X)$, and $J_X$ denotes the endomorphism defined by $J_X=ad^\ast_YX$.
We mention that all quantities of the right hand sides of the above equations are computed with respect to the left-invariant Riemannian metric $g$.
\end{lem}
\begin{proof}
For the left-invariant Riemannian metric $g$ we have:
\begin{equation*}
    \nabla_XY=\frac{1}{2}\Big{(}[X,Y]-ad^\ast_XY-ad^\ast_YX\Big{)}
\end{equation*}
and
\begin{eqnarray*}
% \nonumber to remove numbering (before each equation)
  R(X,Y)Z &=& \frac{1}{4}\Big{(}ad_Zad_XY-ad_X^\ast(ad_YZ-ad_Y^\ast Z-ad_Z^\ast Y)\\
          && +ad^\ast_Y(ad_XZ-ad_X^\ast Z-ad_Z^\ast X)-ad_X(ad_Y^\ast Z+ad_Z^\ast Y) \\
          && +ad_Y(ad^\ast_XZ+ad^\ast_ZX)+(ad^\ast_{ad^\ast_YZ}+ad^\ast_{ad^\ast_ZY}-ad^\ast_{ad_YZ})X\\
          && -(ad^\ast_{ad^\ast_XZ}+ad^\ast_{ad^\ast_ZX}-ad^\ast_{ad_XZ})Y\Big{)}\\
          && +\frac{1}{2}(ad^\ast_{ad_XY}Z+ad^\ast_Z ad_XY).
\end{eqnarray*}
On the other hand, based on the formula $(3)$ of \cite{Djiadeu-Boucetta-Wouafo}, for the left-invariant Riemannian metric $g$ we have:
\begin{equation*}
   Ric(X,Y)=-\frac{1}{2}\Big{(}tr(ad_X\circ ad_Y)+tr(ad_X\circ ad^\ast_Y)+g(ad_HX,Y)+g(ad_HY,X)\Big{)}
           -\frac{1}{4}tr(J_X\circ J_Y).
\end{equation*}
Now Lemma 1 of \cite{Kuhnel} completes the proof.
\end{proof}

The following proposition is a direct consequence of Lemma \ref{Lemma curvature} and formula \eqref{Ricci soliton equation}.
\begin{prop}
Suppose that $\tilde{g}=fg$ is a $f$-left-invariant Riemannian metric on an $n$-dimensional Lie group $G$ where $f=e^{-2\phi}$, $\phi(e)=0$, and $\psi=e^\phi$. Then the Riemannian manifold $(G,\tilde{g})$ is a Ricci soliton if and only if there exist a vector field $X$ and a constant $\lambda$ such that for any $Y, Z \in \frak{g}$, we have:
\begin{eqnarray}\label{Ricci soliton conformal}
% \nonumber to remove numbering (before each equation)
        && -\frac{1}{2}\Big{(}tr(ad_X\circ ad_Y)+tr(ad_X\circ ad^\ast_Y)+g(ad_HX,Y)+g(ad_HY,X)\Big{)}\\
        && -\frac{1}{4}tr(J_X\circ J_Y)+\Big{(}\Delta\phi-(n-2)\|\nabla\phi\|^2\Big{)}g(X,Y)+\frac{n-2}{\psi}\nabla^2\psi(X,Y)\nonumber \\
        && -\lambda fg(Y,Z)+\frac{1}{2}(Xf)g(Y,Z)-\frac{1}{2}f(\mathcal{L}_Xg)(Y,Z)=0.    \nonumber
\end{eqnarray}
\end{prop}

We see that the equation \eqref{Ricci soliton conformal} is not very simple for computation. So in the following we give an equivalent formula based on the structural constants of the Lie algebra $\frak{g}$ of $G$. At the first we compute the sectional and Ricci curvatures of f-left-invariant Riemannian metrics using structural constants.

In this article, we use the notation $\{E_1,\cdots,E_n\}$ for a set of left-invariant vector fields on a Lie group $G$ which is an orthogonal basis at any point of $G$ and is an orthonormal basis at the unit element $e$, with respect to a $f$-left-invariant Riemannian metric $\tilde{g}$.

\begin{prop}\label{sectional theorem}
Let $G$ be a Lie group equipped with a $f$-left-invariant Riemannian metric $\tilde{g}$. Suppose that $\alpha_{ijk}$ are structure constants defined by $[E_i,E_j]=\sum_{k=1}^n\alpha_{ijk}E_k$. Then the sectional curvature $\tilde{K}(E_p,E_q)$ is given by the following formula:
\begin{eqnarray}\label{sectional curvature general}
% \nonumber to remove numbering (before each equation)
  \tilde{K}(E_p,E_q) &=& \frac{1}{4f^3}\Big{(}2(f_{p}^2+f_{q}^2)-2f(f_{pp}+f_{qq})\nonumber\\
             && \hspace{1cm} +\sum_{h=1}^n \big{(} (2\delta_{qh}f_q-f_h+2f\alpha_{hqq})(f_h+2f\alpha_{php})\nonumber\\
             && \hspace{2cm} -(\delta_{qh}f_p+\delta_{hp}f_q+f(\alpha_{hpq}+\alpha_{pqh}-\alpha_{qhp}))\\
             && \hspace{2cm} \times (\delta_{hp}f_q-\delta_{qh}f_p+f(\alpha_{pqh}+\alpha_{qhp}-\alpha_{hpq}))\nonumber\\
             && \hspace{2cm} -2f\alpha_{pqh}(\delta_{ph}f_q-\delta_{hq}f_p+f(\alpha_{phq}+\alpha_{hqp}-\alpha_{qph}))\big{)}\Big{)},\nonumber
\end{eqnarray}
where $f_i:=E_if$ and $f_{ij}:=E_jE_if$.
\end{prop}
\begin{proof}
The relation $[E_i,E_j]=\sum_{k=1}^n\alpha_{ijk}E_k$ shows that
\begin{equation}\label{alpha}
    \alpha_{ijk}=\frac{1}{f(a)}\tilde{g}_a([E_i,E_j],E_k).
\end{equation}
Therefore we have
\begin{eqnarray}
% \nonumber to remove numbering (before each equation)
  2\tilde{g}(\nabla_{E_i}E_j,E_k) &=& E_i\tilde{g}(E_j,E_k)\tilde{g}+E_j\tilde{g}(E_i,E_k)-E_k\tilde{g}(E_i,E_j) \nonumber\\
                         && -\tilde{g}(E_i,[E_j,E_k])+\tilde{g}(E_j,[E_k,E_i])+\tilde{g}(E_k,[E_i,E_j]) \\
                         &=& \delta_{jk}f_i+\delta_{ik}f_j-\delta_{ij}f_k+f(-\alpha_{jki}+\alpha_{kij}+\alpha_{ijk}),\nonumber
\end{eqnarray}
and so,
\begin{equation}\label{Levi-Civita connection}
    \nabla_{E_i}E_j=\frac{1}{2f}\sum_{k=1}^n\Big{(}\delta_{jk}f_i+\delta_{ik}f_j-\delta_{ij}f_k+f(\alpha_{ijk}+\alpha_{kij}-\alpha_{jki})\Big{)}E_k.
\end{equation}
Now, for the curvature tensor we have:
\begin{eqnarray}\label{Curvature Tensor}
% \nonumber to remove numbering (before each equation)
  \tilde{R}(E_i,E_j)E_k &=&\tilde{\nabla}_{E_i}\tilde{\nabla}_{E_j}E_k-\tilde{\nabla}_{E_j}\tilde{\nabla}_{E_i}E_k-\tilde{\nabla}_{[E_i,E_j]}E_k\nonumber\\
                &=& \frac{1}{4f^2}\sum_{l=1}^n\Big{(} 2f(\delta_{kl}(f_{ji}-f_{ij})+\delta_{lj}f_{ki}-\delta_{jk}f_{li}-\delta_{li}f_{kj}+\delta_{ik}f_{lj})\nonumber\\
                && \hspace{1.5cm} + 2(\delta_{li}f_kf_j-\delta_{ik}f_lf_j-\delta_{lj}f_kf_i+\delta_{jk}f_lf_i)\nonumber\\
                && \hspace{1.5cm} + \sum_{h=1}^n \big{(} (\delta_{kh}f_j+\delta_{hj}f_k-\delta_{jk}f_h+f(\alpha_{hjk}+\alpha_{jkh}-\alpha_{khj})) \nonumber\\
                && \hspace{2.7cm} \times (\delta_{hl}f_i+\delta_{li}f_h-\delta_{ih}f_l+f(\alpha_{lih}+\alpha_{ihl}-\alpha_{hli})) \\
                && \hspace{2.7cm} - (\delta_{kh}f_i+\delta_{hi}f_k-\delta_{ik}f_h+f(\alpha_{hik}+\alpha_{ikh}-\alpha_{khi}))\nonumber\\
                && \hspace{2.7cm} \times (\delta_{hl}f_j+\delta_{lj}f_h-\delta_{jh}f_l+f(\alpha_{ljh}+\alpha_{jhl}-\alpha_{hlj}))\nonumber\\
                && \hspace{2.7cm} -2f\alpha_{ijh}(\delta_{kl}f_h+\delta_{lh}f_k-\delta_{hk}f_l+f(\alpha_{lhk}+\alpha_{hkl}-\alpha_{klh})) \big{)}\Big{)} E_l.\nonumber
\end{eqnarray}
On the other hand,
\begin{equation}\label{eq3}
    \tilde{g}(E_p,E_p)\tilde{g}(E_q,E_q)-\tilde{g}(E_p,E_q)^2=f^2.
\end{equation}
Finally, the proof is completed by using the formula for sectional curvature.
\end{proof}

\begin{remark}
When considering left-invariant Riemannian metrics on Lie groups, the formula for sectional curvature provided in Theorem \ref{sectional theorem} reduces to Milnor's formula as stated
in \cite{Mi}. This simplification occurs when we set $f$ to be the constant function $f\equiv1$. Furthermore, if the Riemannian metric $\tilde{g}$ is $f$-bi-invariant, then the structural
constants $\alpha_{ijk}$ are skew in the last two indices for any $i$. Therefore, we obtain a simpler formula for sectional curvature in this case.
\end{remark}
In the case where $G$ is an abelian Lie group, we can derive a straightforward formula for sectional curvature.
\begin{cor}\label{abelian case}
Let $G$ be a commutative Lie group equipped with a $f$-left-invariant Riemannian metric $\tilde{g}$. Then for the sectional curvature, we have
\begin{equation}\label{Sectional Commutative}
    \tilde{K}(E_{p},E_{q})=\frac{1}{4f^3}\Big{(}3(f_p^2+f_q^2)-2f(f_{pp}+f_{qq})-\sum_{l=1}^nf_l^2\Big{)}.
\end{equation}
\end{cor}
Now we give the Ricci curvature of a f-left-invariant Riemannian metric.

\begin{prop}\label{Ricci theorem}
Under the assumptions of Proposition \ref{sectional theorem}, the Ricci curvature tensor $\tilde{\mbox{Ric}}$ is given by the following formula:
\begin{eqnarray}\label{Ricci curvature general}
% \nonumber to remove numbering (before each equation)
  \tilde{\mbox{Ric}}(E_p,E_q) &=& \frac{1}{4f^2}\sum_{i=1}^n\Big{(}2f\big{(}\delta_{qi}(f_{pi}-f_{ip})+\delta_{ip}f_{qi}-\delta_{pq}f_{ii}+\delta_{iq}f_{ip}-f_{qp}\big{)}\nonumber\\
                                &&\hspace*{1cm} +2(f_qf_p-\delta_{iq}f_if_p-\delta_{ip}f_qf_i+\delta_{pq}f_i^2)\nonumber\\
                                &&\hspace*{1cm} +\sum_{h=1}^n \big{(} (2f\alpha_{ihi}+f_h)(\delta_{qh}f_p+\delta_{hp}f_q-\delta_{pq}f_h+f(\alpha_{hpq}+\alpha_{pqh}-\alpha_{qhp})) \\
                                &&\hspace*{2cm} -(\delta_{qh}f_i+\delta_{hi}f_q-\delta_{iq}f_h+f(\alpha_{hiq}+\alpha_{iqh}-\alpha_{qhi}))\nonumber\\
                                &&\hspace*{2cm} \times(\delta_{hi}f_p+\delta_{ip}f_h-\delta_{ph}f_i+f(\alpha_{iph}+\alpha_{phi}-\alpha_{hip})) \nonumber\\
                                &&\hspace*{2cm}-2f\alpha_{iph}(\delta_{qi}f_h+\delta_{ih}f_q-\delta_{hq}f_i+f(\alpha_{ihq}+\alpha_{hqi}-\alpha_{qih}))\big{)}\Big{)}.\nonumber
\end{eqnarray}
\end{prop}
\begin{proof}
Let us assume that $e_i:=\frac{E_i}{\sqrt{f}}$, where $i=1,\cdots, n$. It is important to note that, in general, the vector fields $e_i$ are not left-invariant. However, we can observe that the set $\{e_1,\cdots, e_n\}$ forms an orthonormal basis at every point of $G$ with respect to the $f$-left-invariant metric. Now, considering the equation
\begin{equation}
   \tilde{\mbox{Ric}}(E_p,E_q)=f\tilde{\mbox{Ric}}(e_p,e_q)=f\sum_{j=1}^n\tilde{g}( \tilde{R}(e_j,e_p)e_q,e_j)=\frac{1}{f}\sum_{j=1}^n\tilde{g}( \tilde{R}(E_j,E_p)E_q,E_j),
\end{equation}
along with the formula \eqref{Curvature Tensor}, we can conclude the proof.
\end{proof}

\begin{remark}
The previous proposition demonstrates that for left-invariant Riemannian metrics, we have the following expression for the Ricci curvature:
\begin{eqnarray}\label{Ricci curvature left-invariant Riemannian metrics}
% \nonumber to remove numbering (before each equation)
  \mbox{Ric}(E_p,E_q) &=& \frac{1}{4}\sum_{i=1}^n\sum_{h=1}^n\Big{(}2\alpha_{ihi}(\alpha_{hpq}+\alpha_{pqh}-\alpha_{qhp})-(\alpha_{hiq}+\alpha_{iqh}-\alpha_{qhi})(\alpha_{iph}+\alpha_{phi}-\alpha_{hip})\nonumber\\
  && \hspace*{2cm} -2\alpha_{iph}(\alpha_{ihq}+\alpha_{hqi}-\alpha_{qih})\Big{)}.
\end{eqnarray}
\end{remark}

Using the proposition mentioned above, we can derive the following corollary for Ricci soliton $f$-left-invariant Riemannian metrics.

\begin{cor}\label{Ricci soliton condition for f-left-invariant}
Let $G$ be a connected Lie group equipped with a $f$-left-invariant Riemannian metric $\tilde{g}$. Suppose that $X=\sum_{i=1}^n\theta^iE_i$ is an arbitrary vector field on $G$, which is not necessarily left-invariant so the coefficients $\theta^i$ are smooth functions on $G$. Then, $(G,\tilde{g})$ is a Ricci soliton, with expansion constant $\lambda$ and the vector field $X$, if and only if, for any $p,q=1,\cdots, n$:
\begin{equation}\label{necessary and sufficient condition f-invariant}
             \delta_{pq}(Xf)+f(\theta_p^q+\theta_q^p)-\sum_{i=1}^n\theta^if(\alpha_{ipq}+\alpha_{iqp})-2(\delta_{pq}\lambda f-\tilde{\mbox{Ric}}(E_p,E_q))=0
\end{equation}
\end{cor}
\begin{proof}
We can easily observe that for $\mathcal{L}_X\tilde{g}$, we have:
\begin{equation}\label{Lie derivative}
    \mathcal{L}_X\tilde{g}(E_p,E_q)=\delta_{pq}(Xf)+f(\theta_p^q+\theta_q^p)-\sum_{i=1}^n\theta^if(\alpha_{ipq}+\alpha_{iqp}).
\end{equation}
Now, combining the above equation with equations \eqref{Ricci soliton equation} and \eqref{Ricci curvature general}, we can conclude the proof.
\end{proof}

%%-------------f-left-invariant Ricci solitons on two-dimensional Lie groups--------------
\section{\textbf{f-left-invariant Ricci solitons on two-dimensional Lie groups}}\label{two-dimensional Lie groups}
In this section, we present the necessary and sufficient conditions for $f$-left-invariant Riemannian metrics to be Ricci solitons on simply connected two-dimensional Lie groups.
We then reconstruct Hamilton's cigar soliton using $f$-left-invariant Riemannian metrics and provide some examples of such Ricci solitons.\\
First, we note that a simply connected two-dimensional Lie group, up to automorphisms of Lie groups, can be either the abelian Lie group $\mathbb{R}^2$ or the non-abelian solvable Lie group $\mathbb{R}\rtimes\mathbb{R}^{+}$.
\subsection{\textbf{Lie group $G=\Bbb{R}^2$}}\label{Lie group G=R^2}
Up to isometry, the only left-invariant Riemannian metric on $G$ is the metric $g$ such that the set $\left\{\frac{\partial}{\partial x}, \frac{\partial}{\partial y}\right\}$ forms an orthonormal basis at every point. Let $\tilde{g}$ be an arbitrary $f$-left-invariant Riemannian metric on $G$, which is conformally equivalent to $g$ with the conformal factor $f$. We define $E_1:=\frac{\partial}{\partial x}$ and $E_2:=\frac{\partial}{\partial y}$, and observe that $\{E_1, E_2\}$ forms an orthogonal basis at any point and is orthonormal at $e=(0,0)$ with respect to $\tilde{g}$. In this case, the Levi-Civita connection of $\tilde{g}$ is given by:
\begin{eqnarray*}\label{Levi-Civita R^2}
% \nonumber to remove numbering (before each equation)
 \begin{tabular}{|c|c|c|}
  \hline
  % after \\: \hline or \cline{col1-col2} \cline{col3-col4} ...
   $\tilde{\nabla}$ & $E_1$ & $E_2$  \\
  \hline
  $E_1$ & $\frac{1}{2f}(f_1E_1-f_2E_2)$ & $\frac{1}{2f}(f_2E_1+f_1E_2)$ \\
  \hline
  $E_2$ & $\frac{1}{2f}(f_2E_1+f_1E_2)$ & $\frac{1}{2f}(-f_1E_1+f_2E_2)$ \\
  \hline
 \end{tabular}
\end{eqnarray*}

Suppose that $X=\alpha\frac{\partial}{\partial x}+\beta\frac{\partial}{\partial y}$ is an arbitrary vector field on $G$, where $\alpha$ and $\beta$ are smooth real functions on $G$. Then easily we can see the equation \eqref{Ricci soliton equation} reduces to the following system of three equations,
\begin{equation}\label{Ricci Soliton eq for R^2}
 \left\{
  \begin{array}{l}
    \alpha f_x+\beta f_y+2f\alpha_x=2(\lambda-\kappa)f \\
    \alpha f_x+\beta f_y+2f\beta_y=2(\lambda-\kappa)f \\
    \beta_x=-\alpha_y,
  \end{array}
\right.
\end{equation}
where $\kappa=\frac{f_x^2+f_y^2}{2f^3}-\frac{f_{xx}+f_{yy}}{2f^2}$ is the Gaussian curvature of $G$. Additionally, we can deduce that $X=\text{grad}\Phi$ if and only if the following equations hold:
\begin{equation}\label{gradient eq for R^2}
 \left\{
  \begin{array}{l}
    \Phi_x=f\alpha \\
    \Phi_y=f\beta.
  \end{array}
\right.
\end{equation}
Now, let us consider the case where $f(x,y)=\frac{1}{1+x^2+y^2}$ and $\lambda=0$. By substituting these values, we obtain $X=-2x\frac{\partial}{\partial x}-2y\frac{\partial}{\partial y}$ and $\Phi(x,y)=-\ln(1+x^2+y^2)$, which corresponds to the well-known Hamilton's cigar soliton.

In the following example for any expansion constant $\lambda\in\Bbb{R}$ we give a flat gradient Ricci soliton.

\begin{example}
Suppose that $f(x,y)=e^{x+y}$. For an arbitrary real number $\lambda$ let $X=\lambda\frac{\partial}{\partial x}+\lambda\frac{\partial}{\partial y}$. Now the relations \eqref{Ricci Soliton eq for R^2} and \eqref{gradient eq for R^2} show that $(\Bbb{R}^2, \tilde{g})$ is a flat shrinking (if $\lambda>0$), steady (if $\lambda=0$) or expanding (if $\lambda<0$) gradient Ricci soliton with the potential function $\Phi(x,y)=\lambda e^{x+y}$.\\
At the same time if we consider $\lambda=0$ and $X=\frac{\partial}{\partial x}-\frac{\partial}{\partial y}$ then $(\Bbb{R}^2, \tilde{g})$ is a flat steady Ricci soliton which is not gradient.
\end{example}

Maybe someone asks if we can characterize all Ricci solitons as f-left-invariant? The following simple example gives a negative answer to this question. In the following, we present a gradient steady Ricci soliton that is not conformally equivalent to left-invariant Riemannian metrics.

\begin{example}
Let $G$ be the commutative Lie group $\Bbb{R}^2$. Suppose that $g$ is the Riemannian metric on $\Bbb{R}^2$ such that the set $\{X(p),Y(p)\}$ is an orthonormal basis for $T_p\Bbb{R}^2$, where in the standard coordinates of $\Bbb{R}^2$, $X=\frac{\partial}{\partial x}$ and $Y=y\frac{\partial}{\partial x}+\frac{\partial}{\partial y}$. In fact, in the standard coordinates of $\Bbb{R}^2$ we have
\begin{equation}\label{non-conformal Riemannian metric}
    g=\left(
  \begin{array}{cc}
    1 & -y \\
    -y & 1+y^2 \\
  \end{array}
\right).
\end{equation}
We can see that the Riemannian metric $g$ is not conformally equivalent to a left-invariant Riemannian metric on $\Bbb{R}^2$. If the metric $g$ is conformally equivalent to a left-invariant metric $\tilde{g}$ then they must induce the same inner product on the tangent space $T_{(0,0)}\Bbb{R}^2$. The set $\{\frac{\partial}{\partial x}|_{(0,0)}, \frac{\partial}{\partial y}|_{(0,0)}\}$ is an orthonormal set with respect to the inner product induced by $g$. So the only possibility for a left-invariant metric $\tilde{g}$, to be conformally equivalent to $g$ is the standard metric of $\Bbb{R}^2$. For any $p\in\Bbb{R}^2$, the set $\{\frac{\partial}{\partial x}|_p, \frac{\partial}{\partial y}|_p\}$ is an orthogonal set with respect to $\tilde{g}$ but this set is not orthogonal with respect to $g$ unless $p=(0,0)$. So, by considering the Lemma \ref{f-left and f-right condition}, the Riemannian $g$ is not conformally equivalent to $\tilde{g}$.\\
Now we show that $(\Bbb{R}^2,g)$ is a gradient steady Ricci soliton. For the Levi-Civita connection of $(\Bbb{R}^2,g)$ we have
\begin{equation}\label{Levi-Civita connection of g}
    \nabla_XX=\nabla_YY=\nabla_XY=\nabla_YX=0.
\end{equation}
So $\mbox{Ric}_{g}(X,X)=\mbox{Ric}_{g}(Y,Y)=\mbox{Ric}_{g}(X,Y)=0$. Suppose that $W=\theta X+\mu Y=(\theta+\mu y)\frac{\partial}{\partial x}+\mu\frac{\partial}{\partial y}$ is an arbitrary vector field on $\Bbb{R}^2$. Then the Ricci soliton equation $\mathcal{L}_Wg=2(\lambda g-\mbox{Ric}_{g})$ (see \eqref{Ricci soliton equation}), reduces to the following system
\begin{eqnarray*}
% \nonumber to remove numbering (before each equation)
  && \theta_x =\lambda, \\
  && y\mu_x + \mu_y=\lambda,\\
  && \mu_x + y\theta_x + \theta_y=0.
\end{eqnarray*}
Easily we see that $\lambda=0$ and any $\theta,\mu\in\Bbb{R}$ satisfy the above equations. Therefore, for any $\theta,\mu\in\Bbb{R}$, $(\Bbb{R}^2,g)$ with $W=(\theta+\mu y)\frac{\partial}{\partial x}+\mu\frac{\partial}{\partial y}$ is a steady Ricci soliton. Also we can see if $\theta=0$ then $(\Bbb{R}^2,g)$ with $W=\mu y\frac{\partial}{\partial x}+\mu\frac{\partial}{\partial y}$ is a steady gradient Ricci soliton with potential function $\Phi(x,y)=\mu y$.
\end{example}

\subsection{\textbf{Lie group $G=\Bbb{R}\rtimes\Bbb{R}^{+}$}}\label{Lie group G=R*R+}
Let us now consider the non-abelian solvable Lie group $\mathbb{R}\rtimes\mathbb{R}^{+}$. We assume that $G$ is equipped with the $f$-left-invariant Riemannian metric $\tilde{g}$, where the set $\{E_1:=y\frac{\partial}{\partial y}, E_2:=y\frac{\partial}{\partial x}\}$ serves as an orthogonal basis at any point and is orthonormal at $e=(0,1)$. Here, we adopt the natural coordinates $(x,y)$ for $G=\mathbb{R}\rtimes\mathbb{R}^{+}$, with $y>0$.

In this particular case, we have $\alpha_{122}=-\alpha_{212}=1$, while all other structural constants are zero.

Let $X=\alpha E_1+\beta E_2$ be an arbitrary vector field on $G$. The Levi-Civita connection of $\tilde{g}$ is given by the following table:

\begin{eqnarray*}\label{Levi-Civita R times R+}
% \nonumber to remove numbering (before each equation)
 \begin{tabular}{|c|c|c|}
  \hline
  % after \\: \hline or \cline{col1-col2} \cline{col3-col4} ...
   $\tilde{\nabla}$ & $E_1$ & $E_2$  \\
  \hline
  $E_1$ & $\frac{1}{2f}(f_1E_1-f_2E_2)$ & $\frac{1}{2f}(f_2E_1+f_1E_2)$ \\
  \hline
  $E_2$ & $\frac{1}{2f}(f_2E_1+(f_1-2f)E_2)$ & $\frac{1}{2f}((2f-f_1)E_1+f_2E_2)$ \\
  \hline
 \end{tabular}
\end{eqnarray*}
The equation \eqref{Ricci soliton equation} reveals that the Riemannian manifold $(G,\tilde{g})$, together with the vector field $X$ and expansion constant $\lambda$, constitutes a Ricci soliton if and only if the following system of equations holds:
\begin{equation}\label{Ricci Soliton eq for R rtimes R^+}
 \left\{
  \begin{array}{l}
    2f(\lambda-y\alpha_y)+\frac{y^2}{f}(f_{xx}+f_{yy})-\frac{y^2}{f^2}(f_x^2+f_y^2)-y\alpha f_y-y\beta f_x+2=0,\\
    2f(\lambda-\alpha+y\beta_x)+\frac{y^2}{f}(f_{xx}+f_{yy})-\frac{y^2}{f^2}(f_x^2+f_y^2)-y\alpha f_y-y\beta f_x+2=0,\\
    \beta+y\beta_y+y\alpha_x=0.
  \end{array}
\right.
\end{equation}
In the coordinates $(x,y)$ the Riemannian metric $\tilde{g}$ is of the form $\left(
                                                                               \begin{array}{cc}
                                                                                 \frac{f}{y^2} & 0 \\
                                                                                 0 & \frac{f}{y^2} \\
                                                                               \end{array}
                                                                             \right)$, so we can see $X=\mbox{grad}\Phi$ if and only if
\begin{equation}\label{gradient eq for R rtimes R^+}
 \left\{
  \begin{array}{l}
    \Phi_x=\frac{f\beta}{y} \\
    \Phi_y=\frac{f\alpha}{y}.
  \end{array}
\right.
\end{equation}
Furthermore, by utilizing equation \eqref{sectional curvature general} to determine the Gaussian curvature of this manifold, we obtain:
\begin{equation}\label{Gaussian curvature for R rtimes R^+}
% \nonumber to remove numbering (before each equation)
  \kappa=\frac{y^2(f_x^2+f_y^2)}{2f^3}-\frac{2f+y^2(f_{xx}+f_{yy})}{2f^2}.
\end{equation}
Now using the above results we construct non-flat and flat shrinking, steady and expanding Ricci solitons on the non-abelian Lie group $G=\Bbb{R}\rtimes\Bbb{R}^{+}$.
\begin{example}
Suppose that $f(x,y)=\frac{1}{y}$. Then, easily for the Gaussian curvature we have $\kappa=-\frac{3}{2}y$. Now for any $\lambda\in\Bbb{R}$ let $\alpha=-2\lambda+3y$ and $\beta=\frac{-2\lambda x+C_1}{y^3}$ or equivalently let $X=\frac{-2\lambda x+C_1}{y^2}\frac{\partial}{\partial x}+(3y^2-2\lambda y)\frac{\partial}{\partial y}$, where $C_1\in\Bbb{R}$. Using the systems \eqref{Ricci Soliton eq for R rtimes R^+} and \eqref{gradient eq for R rtimes R^+} we have the following results:
\begin{enumerate}
  \item If $C_1=0$ and $\lambda>0$ then $(G,\tilde{g})$ is a non-flat shrinking non-gradient Ricci soliton.
  \item If $C_1=0$ and $\lambda<0$ then $(G,\tilde{g})$ is a non-flat expanding non-gradient Ricci soliton.
  \item If $C_1\neq 0$ and $\lambda=0$ then $(G,\tilde{g})$ is a non-flat steady non-gradient Ricci soliton.
  \item If $\lambda=C_1=0$ then $(G,\tilde{g})$ is a non-flat steady gradient Ricci soliton with potential function $\Phi(x,y)=3\log(y)+C_2$, where $C_2$ is an arbitrary real number.
\end{enumerate}
\end{example}

\begin{example}
If we put $f(x,y)=y^2$ then easily the Gaussian curvature equals to zero. Now for any $\lambda\in\Bbb{R}$ let $\alpha=\lambda$ and $\beta=\frac{\lambda x+C_1}{y}$ or equivalently let $X=(\lambda x+C_1)\frac{\partial}{\partial x}+(\lambda y)\frac{\partial}{\partial y}$, where $C_1\in\Bbb{R}$. The systems \eqref{Ricci Soliton eq for R rtimes R^+} and \eqref{gradient eq for R rtimes R^+} show that, based on the choice of $\lambda$, the non-abelian Riemannian Lie group $(G=\Bbb{R}\rtimes\Bbb{R}^{+},\tilde{g})$ is a flat shrinking (if $\lambda>0$), steady (if $\lambda=0$) or expanding (if $\lambda<0$) gradient Ricci soliton with the potential function $\Phi(x,y)=\frac{\lambda}{2}(x^2+y^2)+C_1x+C_2$, where $C_1$ and $C_2$ are constant real numbers.
\end{example}

%%-------------f-left-invariant Ricci solitons on $\Bbb{R}^2\rtimes\Bbb{R}^{+}$ and the Heisenberg group--------------
\section{\textbf{f-left-invariant Ricci solitons on $\Bbb{R}^2\rtimes\Bbb{R}^{+}$ and the Heisenberg group}}\label{three-dimensional Lie groups}
In this section, we delve into the investigation of $f$-left-invariant Riemannian metrics on two three-dimensional Lie groups: $\mathbb{R}^2\rtimes\mathbb{R}^{+}$ and the Heisenberg group. We aim to establish the necessary and sufficient conditions for these metrics to qualify as Ricci solitons. Additionally, we provide a collection of examples of $f$-left-invariant Ricci solitons on $\mathbb{R}^2\rtimes\mathbb{R}^{+}$.

\subsection{\textbf{Lie group $G=\Bbb{R}^2\rtimes\Bbb{R}^{+}$}}\label{Lie group G=R^2*R+}
In this subsection we consider the Lie group $G=\Bbb{R}^2\rtimes\Bbb{R}^{+}$ with natural coordinates $(x,y,z)$ such that $z>0$. We consider a $f$-left-invariant Riemannian metric $\tilde{g}$ such that the set $\{E_1:=z\frac{\partial}{\partial z}, E_2:=z\frac{\partial}{\partial x}, E_3:=z\frac{\partial}{\partial y}\}$ is an orthogonal basis at any point and is orthonormal at $e=(0,0,1)$. Easily we can see $\alpha_{122}=\alpha_{133}=1$,  $\alpha_{212}=\alpha_{313}=-1$ and the other structural constants are zero. So for the Levi-Civita connection of $\tilde{g}$ we have:

\begin{eqnarray*}\label{Levi-Civita R2*R+}
% \nonumber to remove numbering (before each equation)
\resizebox{\textwidth}{!}{%
 \begin{tabular}{|c|c|c|c|}
  \hline
  % after \\: \hline or \cline{col1-col2} \cline{col3-col4} ...
   $\tilde{\nabla}$ & $E_1$ & $E_2$ & $E_3$  \\
  \hline
  $E_1$ & $\frac{1}{2f}(f_1E_1-f_2E_2-f_3E_3)$ & $\frac{1}{2f}(f_2E_1+f_1E_2)$ & $\frac{1}{2f}(f_3E_1+f_1E_3)$ \\
  \hline
  $E_2$ & $\frac{1}{2f}(f_2E_1+(f_1-2f)E_2)$ & $\frac{1}{2f}((-f_1+2f)E_1+f_2E_2-f_3E_3)$ & $\frac{1}{2f}(f_3E_2+f_2E_3)$ \\
  \hline
  $E_3$ & $\frac{1}{2f}(f_3E_1+(f_1-2f)E_3)$ & $\frac{1}{2f}(f_3E_2+f_2E_3)$ & $\frac{1}{2f}((-f_1+2f)E_1-f_2E_2+f_3E_3)$ \\
  \hline
 \end{tabular}}
\end{eqnarray*}

Suppose that $X=\alpha E_1+\beta E_2+\gamma E_3$ is an arbitrary vector field on $G$, where $\alpha, \beta$ and $\gamma$ are smooth functions on $G$. Then the equations \eqref{necessary and sufficient condition f-invariant} together with \eqref{Ricci curvature general} show that the Riemannian manifold $(G,\tilde{g})$ with the vector field $X$ and expansion constant $\lambda$ is a Ricci soliton if and only if, in the coordiants $(x,y,z)$ the following system holds,

\begin{small}
\begin{equation}\label{Ricci Soliton eq for R^2 rtimes R^+}
 \left\{
  \begin{array}{l}
    z(\alpha f_z+\beta f_x+\gamma f_y)+2zf\alpha_z=2\Big{(}\lambda f+\frac{1}{4f^2}\big{(}8f^2+z^2(4ff_{zz}+2ff_{xx}+2ff_{yy}-4f_z^2-f_x^2-f_y^2)\big{)}\Big{)}\\
    f(\beta+z\beta_z+z\alpha_x)=-\frac{z}{2f^2}(-2zff_{xz}-2ff_x+3zf_xf_z)\\
    f(\gamma+z\beta_z+z\alpha_y)=-\frac{z}{2f^2}(3zf_yf_z-2ff_y-2zff_{yz})\\
    z(\alpha f_z+\beta f_x+\gamma f_y)+2f(z\beta_x-\alpha)=2\Big{(}\lambda f+\frac{1}{4f^2}\big{(}8f^2+z(-4ff_z+2zff_{zz}+4ff_{xx}+2zff_{yy}-zf_z^2\\
    \hspace{11cm} -4zf_x^2-zf_y^2)\big{)}\Big{)}\\
    f(z\gamma_x+z\beta_y)=\frac{z^2}{2f^2}(2ff_{xy}-3f_xf_y)\\
    z(\alpha f_z+\beta f_x+\gamma f_y)+2f(z\gamma_y-\alpha)=2\Big{(}\lambda f+\frac{1}{4f^2}\big{(}8f^2-zff_z+z^2(2ff_{zz}+2ff_{xx}+4ff_{yy}-f_z^2\\
    \hspace{11cm}-f_x^2-4f_y^2)\big{)}\Big{)}
  \end{array}
\right.
\end{equation}
\end{small}
If we denote the $3\times3$ identity matrix as $I_3$, the Riemannian metric $\tilde{g}$ in the coordinates $(x,y,z)$ can be expressed as $\frac{f}{z^2}I_3$. Therefore, we can observe that $X=\mbox{grad}\Phi$ if and only if the following system holds:
\begin{equation}\label{gradient eq for R^2 rtimes R^+}
 \left\{
  \begin{array}{l}
    \Phi_x=\frac{f\beta}{z} \\
    \Phi_y=\frac{f\gamma}{z}\\
    \Phi_z=\frac{f\alpha}{z}.
  \end{array}
\right.
\end{equation}
Using the above systems we construct flat shrinking, steady and expanding gradient and non-gradient Ricci solitons on the non-abelian Lie group $G=\Bbb{R}^2\rtimes\Bbb{R}^{+}$.
\begin{example}
In the coordinates $(x,y,z)$ let $f(x,y,z)=z^2$. The formula \eqref{Ricci curvature general} shows that, for $i,j=1\cdots 3$, $Ric(E_i,E_j)=0$, and so $(G,\tilde{g})$ is Ricci-flat. But we know that any three-dimensional Ricci-flat Riemannian manifold is flat (see \cite{Yau}). For any $\lambda\in\Bbb{R}$ suppose that $\alpha=\frac{-C_2y-C_4x+\lambda z+C_6}{z}$, $\beta=\frac{-C_1y+C_4z+\lambda x+C_5}{z}$ and $\gamma=\frac{C_1x+C_2z+\lambda y+C_3}{z}$, where $C_1,\cdots,C_6\in\Bbb{R}$. Now the system \eqref{Ricci Soliton eq for R^2 rtimes R^+} shows that $(G=\Bbb{R}^2\rtimes\Bbb{R}^{+},\tilde{g})$ is a non-abelian flat shrinking (if $\lambda>0$), steady (if $\lambda=0$) or expanding (if $\lambda<0$) Ricci soliton. \\
On the other hand, using the system \eqref{gradient eq for R^2 rtimes R^+}, we see that the Ricci soliton $(G,\tilde{g})$ is gradient if and only if $C_1=C_2=C_4=0$. If $C_1=C_2=C_4=0$ then the potential function of the gradient Ricci soliton $(G,\tilde{g})$ is $\Phi(x,y,z)=\frac{\lambda}{2}(x^2+y^2+z^2)+C_5x+C_3y+C_6z+C_7$, where $C_7\in\Bbb{R}$.
\end{example}

\subsection{\textbf{The Heisenberg group $H_3$}}\label{Heisenberg group}
In this subsection we study the Heisenberg group $H_3$ with the coordinates $(x,y,z)$ and the multiplication
$$(x_1,y_1,z_1).(x_2,y_2,z_2)=(x_1+x_2,y_1+y_2,z_1+z_2+\frac{1}{2}(x_2y_1-y_2x_1)).$$
Let $\tilde{g}$ be a $f$-left-invariant Riemannian metric such that the set $\{E_1:=\frac{\partial}{\partial x}-\frac{y}{2}\frac{\partial}{\partial z}, E_2:=\frac{\partial}{\partial y}+\frac{x}{2}\frac{\partial}{\partial z}, E_3:=\frac{\partial}{\partial z}\}$ is an orthogonal basis at any point and is orthonormal at $e=(0,0,0)$. So, with respect to this basis, the non-zero structural constants are $\alpha_{123}=-\alpha_{213}=1$. In this case the Levi-Civita connection of $\tilde{g}$ is as follows:

\begin{eqnarray*}\label{Levi-Civita Heisenberg group}
% \nonumber to remove numbering (before each equation)
\resizebox{\textwidth}{!}{%
 \begin{tabular}{|c|c|c|c|}
  \hline
  % after \\: \hline or \cline{col1-col2} \cline{col3-col4} ...
   $\tilde{\nabla}$ & $E_1$ & $E_2$ & $E_3$  \\
  \hline
  $E_1$ & $\frac{1}{2f}(f_1E_1-f_2E_2-f_3E_3)$ & $\frac{1}{2f}(f_2E_1+f_1E_2+fE_3)$ & $\frac{1}{2f}(f_3E_1-fE_2+f_1E_3)$ \\
  \hline
  $E_2$ & $\frac{1}{2f}(f_2E_1+f_1E_2-fE_3)$ & $\frac{1}{2f}(-f_1E_1+f_2E_2-f_3E_3)$ & $\frac{1}{2f}(fE_1+f_3E_2+f_2E_3)$ \\
  \hline
  $E_3$ & $\frac{1}{2f}(f_3E_1-fE_2+f_1E_3)$ & $\frac{1}{2f}(fE_1+f_3E_2+f_2E_3)$ & $\frac{1}{2f}(-f_1E_1-f_2E_2+f_3E_3)$ \\
  \hline
 \end{tabular}}
\end{eqnarray*}

Let $X=\alpha E_1+\beta E_2+\gamma E_3$ be vector field on $H_3$, where $\alpha, \beta, \gamma\in C^\infty(H_3)$. The relations \eqref{necessary and sufficient condition f-invariant} and \eqref{Ricci curvature general} show that the Riemannian manifold $(H_3,\tilde{g})$ with the vector field $X$ and expansion constant $\lambda$ is a Ricci soliton if and only if

\begin{equation}\label{Ricci Soliton eq for Heisenberg group}
 \left\{
  \begin{array}{l}
    Xf+2f\alpha_1=2\Big{(}\lambda f+\frac{1}{4f^2}\big{(}2f^2+4ff_{11}+2ff_{22}+2ff_{33}-4f_1^2-f_2^2-f_3^2\big{)}\Big{)}\\
    f(\beta_1+\alpha_2)=-\frac{1}{2f^2}(-ff_{21}-ff_{12}+3f_1f_2)\\
    f(\alpha_3+\beta+\gamma_1)=\frac{1}{2f^2}(ff_2-2ff_{13}+4ff_{31}-3f_1f_3)\\
    Xf+2f\beta_2=2\Big{(}\lambda f+\frac{1}{4f^2}\big{(}2f^2+2ff_{11}+4ff_{22}+2ff_{33}-f_1^2-4f_2^2-f_3^2\big{)}\Big{)}\\
    f(\gamma_2-\alpha+\beta_3)=-\frac{1}{2f^2}(ff_1+2ff_{23}-4ff_{32}+3f_2f_3)\\
    Xf+2f\gamma_3=2\Big{(}\lambda f-\frac{1}{4f^2}\big{(}2f^2-2ff_{11}-2ff_{22}-4ff_{33}+f_1^2+f_2^2+4f_3^2\big{)}\Big{)}.
  \end{array}
\right.
\end{equation}

In the coordinates $(x,y,z)$, the Riemannian metric $\tilde{g}$ is of the form
$$\left(
 \begin{array}{ccc}
 f(1+\frac{y^2}{4}) & -\frac{xyf}{4} & \frac{yf}{2} \\
 -\frac{xyf}{4} & f(1+\frac{x^2}{4}) & -\frac{xf}{2}\\
 \frac{yf}{2} & -\frac{xf}{2} & f \\
 \end{array}
\right),$$
so, we have $X=\mbox{grad}\Phi$ if and only if
\begin{equation}\label{gradient eq for Heisenberg group}
 \left\{
  \begin{array}{l}
    f=\frac{2\Phi_x-y\Phi_z}{2\alpha} \\
    f=\frac{2\Phi_y+x\Phi_z}{2\beta}\\
    f=\frac{-2y\Phi_x+2x\Phi_y+(x^2+y^2+4)\Phi_z}{2x\beta-2y\alpha+4\gamma}.
  \end{array}
\right.
\end{equation}
\begin{example}
For simplicity if we consider $f(x,y,z)=1$ then $\tilde{g}$ reduces to a left-invariant Riemannian metric on $H_3$. In this case, for a real constant $C$, if we put $\lambda=-\frac{3}{2}$, $\alpha=-x$, $\beta=-y$ and $\gamma=-2z+C$ (or equivalently if $X=-x\frac{\partial}{\partial x}-y\frac{\partial}{\partial y}+(-2z+C)\frac{\partial}{\partial z}$) then the above systems show that $(H_3,\tilde{g})$ is a non-gradient expanding Ricci soliton.
\end{example}

%%-------------------- BIBLIOGRAPHY------------------------

\bibliographystyle{amsplain}

\begin{thebibliography}{99}

\bibitem{Baird} P. Baird, \emph{A class of three-dimensional Ricci solitons}, Geom. Topol., \textbf{13}(2) (2009), 979-1015.

\bibitem{Bernstein-Mettler} J. Bernstein and T. Mettler, \emph{Two-dimensional gradient Ricci solitons revisited}, Int. Math. Res. Not. IMRN, \textbf{2015}(1) (2015), 78-98.

\bibitem{Bohm-Lafuente} C. B\"ohm and R. A. Lafuente, \emph{Non-compact Einstein manifolds with symmetry}, J. Amer. Math. Soc., \textbf{36} (2023), 591-651.

\bibitem{Cao-Chen} H. D. Cao and Q. Chen, \emph{On locally conformally flat gradient steady Ricci solitons}, Trans. Am. Math. Soc., \textbf{364}(5) (2012), 2377-2391.

\bibitem{Catino-Mastrolia-Monticelli-Rigoli} G. Catino, P. Mastrolia, D. D. Monticelli and M. Rigoli, \emph{Conformal Ricci solitons and related integrability conditions}, Adv. Geom., \textbf{16}(3) (2016), 301-328.

\bibitem{Chow} B. Chow, \emph{Ricci Solitons in Low Dimensions}, American Mathematical Society, (2023).

\bibitem{Djiadeu-Boucetta-Wouafo} M.B. Djiadeu Ngaha, M. Boucetta and J. Wouafo Kamga, \emph{The signature of the Ricci curvature of left-invariant Riemannian metrics on nilpotent Lie groups}, Differential Geom. Appl., \textbf{47} (2016), 26-42.

\bibitem{Hamilton} R. S. Hamilton, \emph{The Ricci flow on surfaces}, Contemp. Math., \textbf{71} (1988), 237-261.

\bibitem{Jablonski1} M. Jablonski, \emph{Homogeneous Ricci solitons},
J. Reine Angew. Math., \textbf{699} (2015), 159-182.

\bibitem{Jablonski2} M. Jablonski, \emph{Homogeneous Ricci solitons are algebraic},
Geometry and Topology, \textbf{18}(2014), 2477-2486.

\bibitem{Klepikov} P. N. Klepikov, \emph{Conformally flat algebraic Ricci solitons on Lie groups},
Mat. Zametki, \textbf{104:1}(2018), 62-73; Math. Notes, \textbf{104:1}(2018), 53-62.

\bibitem{Kuhnel} W. K$\ddot{\textrm{u}}$hnel, \emph{Conformal transformations between Einstein spaces},
In: Kulkarni R.S., Pinkall U. (eds) Conformal Geometry. Aspects of Mathematics / Aspekte der Mathematik, \textbf{12}(1988), 105-146.

\bibitem{Mi} J. Milnor, \emph{Curvatures of left invariant metrics on Lie groups}, Adv. Math., \textbf{21} (1976), 293-329.

\bibitem{Oneill} B. O'Neill, \emph{Semi-Riemannian Geometry with Applications to Relativity}, Academic Press, (1983).

\bibitem{Perelman} G. Perelman, \emph{Ricci flow with surgery on three-manifolds},  arXiv:math/0303109v1 (2003).

\bibitem{Salimi} H. R. Salimi Moghaddam, \emph{Left invariant Ricci solitons on three-dimensional Lie groups}, J. Lie Theory, \textbf{29}(4), (2019), 957-968.

\bibitem{Yau} S. T. Yau, \emph{Compact flat Riemannian manifolds}, J. Differ. Geom., \textbf{6}, (1972), 395-402.

\end{thebibliography}

\end{document}